\documentclass[reqno,12pt,letterpaper]{amsart}
\usepackage[proof]{sdmacros}

\title[Control of eigenfunctions on hyperbolic surfaces]%
{Control of eigenfunctions on hyperbolic surfaces:\\
an application of fractal uncertainty principle}
\author{Semyon Dyatlov}
\email{dyatlov@math.mit.edu}
\address{Department of Mathematics, Massachusetts Institute of Technology,
77 Massachusetts Ave, Cambridge, MA 02139}

\begin{document}

\begin{abstract}
This expository article,
written for the proceedings of the Journ\'ees EDP (Roscoff, June 2017),
presents recent work joint with Jean Bourgain~\cite{fullgap}
and Long Jin~\cite{meassupp}. We in particular show that eigenfunctions of the Laplacian
on hyperbolic surfaces are bounded from below in $L^2$ norm on each nonempty
open set, by a constant depending on the set but not on the eigenvalue.
\end{abstract}

\maketitle

\section{Introduction}

The purpose of this expository article is to present a recent result of Dyatlov--Jin~\cite{meassupp}
on \emph{control of eigenfunctions on hyperbolic surfaces}. The main tool is
the \emph{fractal uncertainty principle} proved by Bourgain--Dyatlov~\cite{fullgap}.
To keep the article short we will omit many technical details of the proofs,
referring the reader to the original papers~\cite{fullgap,meassupp} and to the lecture notes~\cite{fupnotes}.
The results discussed here belong to \emph{quantum chaos};
see~\cite{MarklofReview,ZelditchReview,SarnakQUE} for introductions to the subject.

The setting considered is a \emph{hyperbolic surface},
that is a compact Riemannian surface $(M,g)$ of Gauss curvature $-1$,
which we fix throughout the paper. For convenience we assume that $M$ is connected and
oriented. The homogeneous \emph{geodesic flow}
\begin{equation}
  \label{e:geodesic-flow}
\varphi_t:T^*M\setminus 0\to T^*M\setminus 0,\quad
T^*M\setminus 0:=\{(x,\xi)\colon x\in M,\ \xi\in T_x^*M,\ \xi\neq 0\}  
\end{equation}
is a classical example of a strongly chaotic, or more precisely, \emph{hyperbolic},%
\footnote{The word `hyperbolic' is used in two different meanings here: a surface is hyperbolic
when it has constant negative curvature and a flow is hyperbolic when it admits a stable/unstable
decomposition. Luckily for us, hyperbolic surfaces do give rise to hyperbolic geodesic flows.}
dynamical system, see~\S\ref{s:long-time} below. We can view $\varphi_t$ as a Hamiltonian flow:
\begin{equation}
  \label{e:p-def}
\varphi_t=\exp(tH_p),\quad
p(x,\xi)=|\xi|_g.
\end{equation}
Let $-\Delta_g\geq 0$ be the Laplace--Beltrami operator on $M$.
Our first result is
\begin{theo}
  \label{t:basic-control}
Fix a nonempty open set $\Omega\subset M$. Then for each eigenfunction of $\Delta_g$
\begin{equation}
  \label{e:eigenfunction}
u\in C^\infty(M),\quad
(-\Delta_g-\lambda^2)u=0,\quad
\|u\|_{L^2(M)}=1
\end{equation}
we have the following lower bound
where the constant $c_\Omega$ depends on $\Omega$ but not on $\lambda$:
\begin{equation}
  \label{e:basic-control}
\|u\|_{L^2(\Omega)}\geq c_\Omega>0.
\end{equation}
\end{theo}
\Remarks 1. For bounded $\lambda$, \eqref{e:basic-control}
follows from unique continuation principle. Thus Theorem~\ref{t:basic-control}
is really a result about the \emph{high frequency limit} $\lambda\to \infty$.

\noindent 2. Theorem~\ref{t:basic-control} uses global information about the structure
of the manifold $M$; it does not hold for every Riemannian manifold. For instance,
if $M$ is the round 2-sphere and $\Omega$ lies in just one hemisphere, then
one can construct a sequence of eigenfunctions $u_n$,
with $\|u_n\|_{L^2(\Omega)}$ exponentially small as $\lambda_n\to\infty$.

An application of Theorem~\ref{t:basic-control} (strictly speaking, of
Theorem~\ref{t:advanced-control} below) is the following observability result
for the Schr\"odinger equation:
\begin{theo}\cite{JinControl}
Let $M,\Omega$ be as in Theorem~\ref{t:basic-control} and fix $T>0$. Then there exists
$C=C(\Omega,T)>0$ such that for all $f\in L^2(M)$ we have
$$
\|f\|_{L^2(M)}^2\leq C\int_0^T \|e^{it\Delta}f\|_{L^2(\Omega)}^2\,dt.
$$
\end{theo}

\subsection{A semiclassical result}

We now give a stronger version of Theorem~\ref{t:basic-control}
(see Theorem~\ref{t:advanced-control} below) which
allows to localize $u$ in frequency in addition to localization in position.
To state it we introduce a \emph{semiclassical quantization}
$$
\Op_h:a\in C_0^\infty(T^*M)\ \mapsto\ \Op_h(a):L^2(M)\to L^2(M).
$$
We refer the reader to~\cite[\S14.2.2]{e-z} for the definition and properties of $\Op_h$.
The operator $\Op_h(a)$, acting on appropriately chosen semiclassical Sobolev
spaces, can also be defined when $a$ is not compactly supported but instead
satisfies a polynomial growth bound as $\xi\to\infty$. Functions satisfying
such bounds are called \emph{symbols}.

We henceforth put $h:=\lambda^{-1}>0$, so that the equation~\eqref{e:eigenfunction}
becomes
\begin{equation}
  \label{e:beigen}
(-h^2\Delta_g-1)u=0,\quad
\|u\|_{L^2(M)}=1.
\end{equation}
The elliptic estimate implies (see~\eqref{e:elliptor} below)
that solutions to~\eqref{e:beigen} are microlocalized on the cosphere bundle
$S^*M=\{(x,\xi)\colon p(x,\xi)=1\}$:
\begin{equation}
  \label{e:basic-ellipticity}
a|_{S^*M}\equiv 0\ \Longrightarrow\ \|\Op_h(a)u\|_{L^2}=\mathcal O(h).
\end{equation}
In particular, $\|\Op_h(a)u\|$ cannot be bounded away from 0 as $h\to 0$
when $a|_{S^*M}\equiv 0$. Our next result in particular implies that such
a lower bound holds for all other $a$:
\begin{theo}\cite[Theorem~2]{meassupp}
  \label{t:advanced-control}
Assume that $a\in C_0^\infty(T^*M)$ and $a|_{S^*M}\not\equiv 0$. Then there exist
constants $C,h_0>0$ depending on $a$ but not on $h$, such that
for all $u\in H^2(M)$ and $h<h_0$
\begin{equation}
  \label{e:advanced-control}
\|u\|_{L^2(M)}\leq C\|\Op_h(a)u\|_{L^2(M)}+{C\log(1/h)\over h}\|(-h^2\Delta_g-1)u\|_{L^2(M)}.
\end{equation}
\end{theo}
\Remarks 1. Theorem~\ref{t:advanced-control} applies to $u$ which are not exact eigenfunctions
of $\Delta_g$. It gives a nontrivial statement when $u$ is an $o(h/\log(1/h))$-quasimode
for the Laplacian.

\noindent 2. Theorem~\ref{t:basic-control} follows immediately from Theorem~\ref{t:advanced-control}.
Indeed, take $a=a(x)\in C_0^\infty(M)$ such that $a\not\equiv 0$ and $\supp a\subset \Omega$.
Then \eqref{e:beigen}, \eqref{e:basic-ellipticity}, and~\eqref{e:advanced-control} together imply that
$1=\|u\|_{L^2(M)}\leq C\|\Op_h(a)u\|_{L^2(M)}$. It remains to use that $\Op_h(a)$ is a multiplication
operator, namely
$\Op_h(a)u=au$.

\subsection{Application to semiclassical measures}

We next present an application of Theorem~\ref{t:advanced-control} to semiclassical defect measures,
which are weak limits of high frequency sequences of eigenfunctions of $\Delta_g$
defined as follows:
\begin{defi}
  \label{d:measure}
Consider a sequence of eigenfunctions
\begin{equation}
  \label{e:measure-sequence}
u_j\in C^\infty(M),\quad
(-h_j^2\Delta_g-1)u_j=0,\quad
\|u_j\|_{L^2(M)}=1,\quad
h_j\to 0.
\end{equation}
Let $\mu$ be a Borel measure on $T^*M$. We say that $u_j$ converges to $\mu$ weakly
if
\begin{equation}
  \label{e:measure-def}
\langle\Op_{h_j}(a)u_j,u_j\rangle_{L^2(M)}\to\int_{T^*M}a\,d\mu\quad\text{as }j\to\infty
\quad\text{for all }a\in C_0^\infty(T^*M).
\end{equation}
We say that $\mu$ is a \textbf{semiclassical measure} on $M$, if $\mu$ is the weak limit
of some sequence of eigenfunctions.
\end{defi}
Semiclassical measures enjoy many nice properties~\cite[Chapter~5]{e-z},
in particular
\begin{itemize}
\item every sequence of eigenfunctions has a subsequence converging weakly to some
measure;
\item every semiclassical measure is a probability measure supported on $S^*M$;
\item every semiclassical measure is invariant under the geodesic flow $\varphi_t$.
\end{itemize}
The question of which $\varphi_t$-invariant probability measures on $S^*M$
can be semiclassical measures has received considerable attention in the quantum chaos
literature. Here we only mention a
few works, referring the reader to the introduction to~\cite{meassupp} for
a more comprehensive overview.

The \emph{Quantum Ergodicity (QE)} theorem of Shnirelman, Zelditch,
and Colin de Verdi\`ere \cite{Shnirelman,ZelditchQE,CdV} implies that a density 1 sequence of eigenfunctions
converges to the \emph{Liouville measure} $\mu_L$, which is the natural volume measure on $S^*M$.
Rundick--Sarnak~\cite{RudnickSarnak} made the \emph{Quantum Unique Ergodicity (QUE)}
conjecture that $\mu_L$ is the only semiclassical measure, i.e. it is the weak limit
of the entire sequence of eigenfunctions.
So far QUE has only been proved for Hecke eigenfunctions on arithmetic surfaces,
by Lindenstrauss~\cite{Lindenstrauss}.

While QUE for general hyperbolic surfaces is still an open problem,
one can show that semiclassical measures satisfy lower bounds
on entropy. In particular, Anantharaman--Nonnenmacher~\cite{AnantharamanNonnenmacher}
proved that each semiclassical measure $\mu$ has Kolmogorov--Sinai entropy
$H_{\mathrm{KS}}(\mu)\geq {1\over 2}$.
This rules out many possible weak limits,
in particular the delta measure on a closed geodesic (which has entropy 0).
We remark that $H_{\mathrm{KS}}(\mu_L)=1$, so in some sense~\cite{AnantharamanNonnenmacher}
rules out half of the invariant measures as weak limits.

For $a\in C_0^\infty(T^*M)$, we have $\|\Op_h(a)u\|_{L^2}^2=\langle \Op_h(|a|^2)u,u\rangle_{L^2}
+\mathcal O(h)\|u\|_{L^2}^2$. Thus Theorem~\ref{t:advanced-control} implies
\begin{theo}
  \label{t:measures}
Any semiclassical measure $\mu$ on $M$ has support equal to the entire $S^*M$,
that is $\mu(U)>0$ for any nonempty open $U\subset S^*M$.  
\end{theo}
We remark that Theorem~\ref{t:measures} is in some sense orthogonal to the entropy
bound mentioned above. In particular, there exist $\varphi_t$-invariant probability
measures $\mu$ on~$S^*M$ such that $\supp\mu\neq S^*M$ but $H_{\mathrm{KS}}(\mu)>{1\over 2}$,
in fact the entropy of $\mu$ may be arbitrarily close to~1. The simplest way to construct
these is to choose $M$ which has a short closed geodesic, cut $M$ along this geodesic
and attach two funnels to obtain a convex co-compact surface $\widetilde M$, and define
$\mu$ as the Patterson--Sullivan measure on the set of trapped geodesics on $\widetilde M$
(see for instance~\cite[\S14.2]{BorthwickBook}).

\section{Propagation of control}

In this section we explain how to reduce Theorem~\ref{t:advanced-control} to a
norm bound~\eqref{e:fup-used} which will follow from the fractal uncertainty principle.

In this section $\|\bullet\|$ denotes the $L^2(M)$-norm. We will only show a weaker
version of the estimate~\eqref{e:advanced-control}, in the case of exact eigenfunctions:
\begin{equation}
  \label{e:lazy-control}
(-h^2\Delta_g-1)u=0\quad\Longrightarrow\quad
\|u\|\leq C\log(1/h)\|\Op_h(a)u\|.
\end{equation}
See the end of~\S\ref{s:proof} for a discussion of how to modify the proof
to remove the $\log(1/h)$ prefactor.

We start with a few basic observations and reductions.
We henceforth assume that $(-h^2\Delta_g-1)u=0$,
$\|u\|=1$, and $h$ is small.

By the semiclassical elliptic estimate,
we have for any two symbols $a,b$ (with the order of growth of $b$ in $\xi$ bounded by that of $a$)
\begin{equation}
  \label{e:elliptor}
\supp b\subset \{a\neq 0\}\quad\Longrightarrow\quad
\|\Op_h(b)u\|\leq C\|\Op_h(a)u\|+\mathcal O(h)\|u\|.
\end{equation}
This follows immediately from the algebraic properties of semiclassical
quantization, writing $\Op_h(b)=\Op_h(b/a)\Op_h(a)+\mathcal O(h)$.
This argument also gives the bound~\eqref{e:basic-ellipticity},
using that $-h^2\Delta_g-1=\Op_h(p^2-1)+\mathcal O(h)$ where $p$ is defined in~\eqref{e:p-def}.

Now, fix $a\in C_0^\infty(T^*M)$ such that $a|_{S^*M}\not\equiv 0$. By~\eqref{e:basic-ellipticity}
the estimate~\eqref{e:lazy-control} only depends on the values of $a$ on $S^*M$. By~\eqref{e:elliptor}
it suffices to prove~\eqref{e:lazy-control} with $a$ replaced by some symbol whose support
lies in $\{a\neq 0\}$. Thus we may assume without loss of generality that
$a$ is homogeneous of degree 0 near $S^*M$ and
\begin{equation}
  \label{e:U-def}
a\equiv 1\quad\text{near }\mathcal U\cap S^*M
\end{equation}
where $\mathcal U\subset T^*M\setminus 0$ is some nonempty open conic set.

\subsection{Partitions and control}

Define $a_1,a_2\in C_0^\infty(T^*M\setminus 0;[0,1])$ such that
\begin{equation}
  \label{e:a-j-def}
a_1=a\quad\text{and}\quad a_1+a_2=1\quad\text{on}\quad S^*M;\quad
\supp a_2\cap \mathcal U=\emptyset.
\end{equation}
We may also assume that $a_1+a_2\leq 1$ everywhere.
Define the pseudodifferential operators
$$
A_j:=\Op_h(a_j):L^2(M)\to L^2(M),\quad
j=1,2.
$$
Note that the $L^2\to L^2$ operator norms of $A_1,A_2,A_1+A_2$ are bounded by $1+\mathcal O(h)$,
see~\cite[(2.10)]{meassupp}. We can choose the quantizations $A_1,A_2$ so that $A_1+A_2$ commutes
with $-\Delta_g$, see~\cite[\S3.1]{meassupp}. By~\eqref{e:basic-ellipticity}
we have
\begin{gather}
  \label{e:part-1}
\|u-(A_1+A_2)u\|=\mathcal O(h),\\
  \label{e:part-2}
\|A_1u\|\leq \|\Op_h(a)u\|+\mathcal O(h).
\end{gather}
The bound~\eqref{e:part-2} says that $\|A_1u\|$ is \emph{controlled}
in the sense that it is bounded by an expression of the form
$C\|\Op_h(a)u\|+o(1)_{h\to 0}$. Our goal is to obtain control
on $u$ in a large region of the phase space $T^*M$.

Since $u$ is an eigenfunction of the Laplacian, it is also an eigenfunction
of the half-wave propagator:
$$
U(t)u=e^{-it/h}u,\quad
U(t):=\exp(-it\sqrt{-\Delta_g}):L^2(M)\to L^2(M).
$$
For a bounded operator $A:L^2(M)\to L^2(M)$, denote
\begin{equation}
  \label{e:A-t-def}
A(t):=U(-t)AU(t).
\end{equation}
Then~\eqref{e:part-2} implies that $A_1(t)u$ is controlled for all $t$:
\begin{equation}
  \label{e:time-control}
\|A_1(t)u\|=\|A_1u\|\leq \|\Op_h(a)u\|+\mathcal O(h).
\end{equation}
On the other hand we have \emph{Egorov's Theorem}~\cite[Theorem~11.1]{e-z}: for bounded $t$,
\begin{equation}
  \label{e:basic-egorov}
A_1(t)=\Op_h(a_1\circ\varphi_t)+\mathcal O(h)_{L^2\to L^2}.
\end{equation}
Therefore, $\|\Op_h(a_1\circ\varphi_t)u\|$ is controlled for any bounded $t$.

The above observations imply the required bound~\eqref{e:lazy-control},
even without the $\log(1/h)$ prefactor,
in cases when $\mathcal U$ satisfies the \emph{geometric control condition},
namely there exists $T>0$ such that for each $(x,\xi)\in S^*M$,
the geodesic segment $\{\varphi_t(x,\xi)\mid 0\leq t\leq T\}$
intersects $\mathcal U$. Indeed, in this case there exists a finite
set of times $t_1,\dots,t_N\in [0,T]$ such that
$$
\widetilde a:=\sum_{\ell=1}^N a_1\circ\varphi_{t_\ell}>0\quad\text{on }S^*M.
$$
We control each $\|\Op_h(a_1\circ\varphi_{t_\ell})u\|$ and thus
the sum $\|\Op_h(\widetilde a)u\|$. By~\eqref{e:basic-ellipticity}
and~\eqref{e:elliptor} we have
$1=\|u\|\leq C\|\Op_h(\widetilde a)u\|+\mathcal O(h)$, giving~\eqref{e:lazy-control}.

\subsection{Long time propagation}
  \label{s:long-time}

Many sets $\mathcal U\subset S^*M$ do not satisfy the geometric control condition
(for instance it is enough for $\mathcal U$ to miss just one closed geodesic on~$M$).
To obtain~\eqref{e:lazy-control} we then need to use Egorov's Theorem~\eqref{e:basic-egorov}
(more precisely, its version for $A_2$) for times $t$ which grow as $h\to 0$.

One has to take extra caution:
for $t$ which are too large, the derivatives of the symbol $a_2\circ\varphi_t$
grow too fast with $h$ and the quantization $\Op_h(a_2\circ\varphi_t)$ no longer makes
sense (i.e. no longer has the standard properties). One can also think of that
problem as $a_2\circ\varphi_t$ localizing to a set which is so thin in some directions
that such localization contradicts the uncertainty principle.

It turns out that for hyperbolic surfaces, Egorov's Theorem still holds
when $|t|\leq \rho\log(1/h)$ for any fixed $0\leq\rho<1$. To explain this,
we first study the growth of derivatives of $a_2\circ\varphi_t$ as $|t|\to\infty$.
The geodesic flow $\varphi_t$ is \emph{hyperbolic} on the level sets $\{p=\const\}$,
namely it has a stable/unstable decomposition. More precisely, there exists a smooth frame
$H_p,U_+,U_-,D$ (see~\cite[\S 2.1]{meassupp})
on $T^*M\setminus 0$
such that:
\begin{itemize}
\item $H_p$ is the generator of the flow, that is $\varphi_t=\exp(tH_p)$;
\item $D=\xi\cdot\partial_\xi$ is the generator of dilations in the fibers of $T^*M\setminus 0$,
and it is invariant under $\varphi_t$: $(\varphi_t)_*D=D$;
\item $U_+$ is the \emph{stable horocyclic field}, which decays exponentially along the flow:
$(\varphi_t)_*U_+=e^{-t}U_+$;
\item $U_+$ is the \emph{unstable horocyclic field}, which grows exponentially along the flow:
$(\varphi_t)_*U_-=e^{t}U_-$.
\end{itemize}
We see that $H_p(a_2\circ\varphi_t)$ and $D(a_2\circ\varphi_t)$ are bounded uniformly in $t$;
$U_+(a_2\circ \varphi_t)$ and $U_-(a_2\circ\varphi_{-t})$ are bounded when $t\geq 0$;
and $U_-(a_2\circ\varphi_t)$ and $U_+(a_2\circ\varphi_{-t})$ grow like $e^t$ when $t\geq 0$.
In particular we have (see~\cite[\S 2.3]{meassupp})
$$
\begin{aligned}
a_2\circ\varphi_t\in S^{\comp}_{L_s,\rho}(T^*M\setminus 0),&\quad 0\leq t\leq \rho\log(1/h);\\
a_2\circ\varphi_t\in S^{\comp}_{L_u,\rho}(T^*M\setminus 0),&\quad -\rho\log(1/h)\leq t\leq 0
\end{aligned}
$$
where $L_s,L_u$ are the \emph{weak stable/unstable} Lagrangian foliations with tangent spaces
$$
L_s=H_p\oplus U_+,\quad
L_u=H_p\oplus U_-
$$
and the class $S^{\comp}_{L,\rho}(T^*M\setminus 0)$, $L\in \{L_u,L_s\}$, consists of symbols $b(x,\xi;h)$ such that
\begin{itemize}
\item $b$ is compactly supported in $(x,\xi)$ inside an $h$-independent subset of $T^*M\setminus 0$;
\item $\sup |b|\leq C$, where $C$ is independent of $h$;
\item for each vector fields $Y_1,\dots,Y_m,Z_1,\dots,Z_k$ on $T^*M\setminus 0$
such that $Y_1,\dots,Y_m$ are tangent to $L$, and each $\varepsilon>0$ we have
$$
\sup |Y_1\cdots Y_mZ_1\cdots Z_kb|\leq Ch^{-\rho k-\varepsilon}.
$$
\end{itemize}
In other words, $b$ only grows by an arbitrarily small power of $h$ when differentiated
along $L$, but may grow by a factor up to $h^{-\rho-}$ when differentiated in other directions.

Each symbol $b\in S^{\comp}_{L,\rho}(T^*M\setminus 0)$ can be quantized
to an operator
$$
\Op_h^L(b):L^2(M)\to L^2(M),
$$
see~\cite[Appendix]{meassupp}. Each of the resulting classes $S^{\comp}_{L_s,\rho}$,
$S^{\comp}_{L_u,\rho}$ satisfies standard
properties of semiclassical quantization. However, when $\rho>1/2$
and $b_1\in S^{\comp}_{L_s,\rho}$, $b_2\in S^{\comp}_{L_u,\rho}$ the product
$\Op_h^{L_s}(b_1)\Op_h^{L_u}(b_2)$ does not lie in any pseudodifferential class~-- this
observation will be important for the fractal uncertainty principle later.

As promised in the beginning of this subsection,
a version Egorov's Theorem~\eqref{e:basic-egorov}
continues to hold, see~\cite[Proposition~A.8]{meassupp}:
\begin{equation}
  \label{e:advanced-egorov}
\begin{aligned}
A_2(t)&=\Op_h^{L_s}(a_2\circ\varphi_t)+\mathcal O(h^{1-\rho-})_{L^2\to L^2},\quad 0\leq t\leq\rho\log(1/h),\\
A_2(t)&=\Op_h^{L_u}(a_2\circ\varphi_t)+\mathcal O(h^{1-\rho-})_{L^2\to L^2},\quad -\rho\log(1/h)\leq t\leq 0.\\
\end{aligned}
\end{equation}

\subsection{Control by propagation and proof of Theorem~\ref{t:advanced-control}}
  \label{s:proof}

We now explain how to control all but a small portion of $u$
by using~\eqref{e:time-control}. Fix $0<\rho<1$ very close to 1,
to be chosen later, and choose
$$
N_1:=\lceil \rho \log(1/h)\rceil.
$$
Using the notation~\eqref{e:A-t-def}, define the operators
$$
\begin{gathered}
A_{\mathcal X}^\pm:=A_2(\mp N_1)\cdots A_2(\mp 1)A_2(0),\\ 
A_{\mathcal Y}^\pm:=\sum_{j=0}^{N_1}A_{\mathcal Y,j}^\pm,\quad
A_{\mathcal Y,j}^\pm=A_2(\mp N_1)\cdots A_2(\mp (j+1))A_1(\mp j)
(A_1+A_2)^j.
\end{gathered}
$$
Since $A_1+A_2$ commutes with $\Delta_g$, we have
$A_1(t)+A_2(t)=A_1+A_2$, therefore
$$
A_{\mathcal X}^\pm+A_{\mathcal Y}^\pm=(A_1+A_2)^{N_1+1}.
$$
Thus by~\eqref{e:part-1}
\begin{equation}
  \label{e:u-decomposed}
u=A_{\mathcal X}^\pm u+A_{\mathcal Y}^\pm u+\mathcal O(h^{1-}).
\end{equation}
Using~\eqref{e:part-1} again together with~\eqref{e:time-control}, we have
for all $j$
$$
\|A_{\mathcal Y,j}^\pm u\|\leq C\|A_1(j)u\|+\mathcal O(h^{1-})
\leq C\|\Op_h(a)u\|+\mathcal O(h^{1-}).
$$
Summing these up we see that $A_{\mathcal Y}^\pm u$ is controlled as follows:
\begin{equation}
  \label{e:Y-pm-controlled}
\|A_{\mathcal Y}^\pm u\|\leq C\log(1/h)\|\Op_h(a)u\|+\mathcal O(h^{1-}).
\end{equation}
Together with~\eqref{e:u-decomposed} this gives
$$
\|u-A_{\mathcal X}^-A_{\mathcal X}^+ u\|\leq C\log(1/h)\|\Op_h(a)u\|+\mathcal O(h^{1-}).
$$
Using the properties of $S^{\comp}_{L,\rho}$ calculus and Egorov's theorem
for long time~\eqref{e:advanced-egorov}, we get (see~\cite[Lemma~5.9]{meassupp})
$$
A_{\mathcal X}^+=\Op_h^{L_u}(a_+)+\mathcal O(h^{1-\rho-})_{L^2\to L^2},\quad
A_{\mathcal X}^-=\Op_h^{L_s}(a_-)+\mathcal O(h^{1-\rho-})_{L^2\to L^2},\quad
$$
where the symbols $a_\pm$ are given by
\begin{equation}
  \label{e:a-pm-def}
a_+=\prod_{j=0}^{N_1} (a_2\circ\varphi_{-j})\in S^{\comp}_{L_u,\rho}(T^*M\setminus 0),\quad
a_-=\prod_{j=0}^{N_1} (a_2\circ\varphi_j)\in S^{\comp}_{L_s,\rho}(T^*M\setminus 0).
\end{equation}
Thus we obtain
\begin{equation}
  \label{e:u-almost-controlled}
\|u-\Op_h^{L_s}(a_-)\Op_h^{L_u}(a_+)u\|\leq C\log(1/h)\|\Op_h(a)u\|+\mathcal O(h^{1-\rho-}).
\end{equation}
The key component of the proof is the following estimate, following from the
fractal uncertainty principle: if we take $\rho$ sufficiently close to 1 then
\begin{equation}
  \label{e:fup-used}
\|\Op_h^{L_s}(a_-)\Op_h^{L_u}(a_+)\|_{L^2\to L^2}\leq Ch^{\beta}
\end{equation}
where $\beta>0$ depends only on the set $\mathcal U$ from~\eqref{e:U-def}.%
\footnote{Note that the product $\Op_h^{L_s}(a_-)\Op_h^{L_u}(a_+)$ is not in any pseudodifferential calculus.
This makes sense in light of~\eqref{e:fup-used}: if we could write
$\Op_h^{L_s}(a_-)\Op_h^{L_u}(a_+)=\Op_h(a_-a_+)$ then the norm~\eqref{e:fup-used}
would converge to $\sup |a_-a_+|=1$ as $h\to 0$.}
Together~\eqref{e:u-almost-controlled} and~\eqref{e:fup-used} imply~\eqref{e:lazy-control},
where we take $h$ small enough to remove the $\mathcal O(h^\beta+h^{1-\rho-})$ error.

To remove the $\log(1/h)$ prefactor in~\eqref{e:lazy-control}, we have to revise the
definitions of $A_{\mathcal X}^\pm,A_{\mathcal Y}^\pm$. Roughly speaking,
we expand $A_{\mathcal X}^\pm$ to include the products
$A_{w_{N_1}}(\mp N_1)\cdots A_{w_1}(\mp 1)A_{w_0}(0)$
where $w_0,\dots,w_{N_1}\in \{1,2\}$ are such that
at most $\alpha N_1$ of the digits $w_j$ are equal to~1,
where $\alpha>0$ is a constant (the choice $\alpha=0$
would correspond to the previous definition of $A_{\mathcal X}$). If $\alpha$ is small enough
depending on $\beta$ from~\eqref{e:fup-used}, then
the norm of $A_{\mathcal X}^-A_{\mathcal X}^+$ still goes to~0
with $h$.

Correspondingly we let $A_{\mathcal Y}^\pm$ include all the products
$A_{w_{N_1}}(\mp N_1)\cdots A_{w_1}(\mp 1)A_{w_0}(0)$
where at least $\alpha N_1$ of the digits $w_j$ are equal to~1.
Using an argument similar to~\cite{Anantharaman}
(and further revising the definitions of $A_{\mathcal X}^\pm,A_{\mathcal Y}^\pm$),
we can show that for an $\alpha$-independent constant $C$ we have
the following stronger version of~\eqref{e:Y-pm-controlled}:
$$
\|A_{\mathcal Y}^\pm u\|\leq C\alpha^{-1}\|\Op_h(a)u\|+\mathcal O(h^{1/8-}),
$$
which removes the $\log(1/h)$ prefactor in~\eqref{e:lazy-control}. See~\cite[\S 4]{meassupp} for details.

\section{Reduction to fractal uncertainty principle}
  \label{s:fup}

In this section we explain how to prove the operator norm bound~\eqref{e:fup-used}
based on a fractal uncertainty principle, Proposition~\ref{l:fup-fourier}.

\subsection{Porosity}

Let $a_\pm$ be the symbols defined in~\eqref{e:a-pm-def}. In this subsection
we establish \emph{porosity} of their supports in the stable/unstable directions.
We first define porosity of subsets of $\mathbb R$, which holds for
instance for fractal sets of dimension $<1$:
\begin{defi}
  \label{d:porosity}
Let $\Omega\subset \mathbb R$ be a closed set, $0<\alpha_0\leq\alpha_1$, and $\nu\in (0,1)$.
We say that $\Omega$ is \textbf{$\nu$-porous on scales $\alpha_0$ to $\alpha_1$}
if for each interval $I$ of size $|I|\in [\alpha_0,\alpha_1]$, there exists
a subinterval $J\subset I$ such that $|J|=\nu |I|$ and $J\cap\Omega=\emptyset$.
\end{defi}
In other words, $\Omega$ has holes at all points on all scales between $\alpha_0$ and $\alpha_1$.

The following statement shows that $\supp a_+$ is porous in the stable direction
and $\supp a_-$ is porous in the unstable direction:
\begin{lemm}
  \label{l:porosity-1}
There exist $\nu>0$, $C_0>0$ depending only on $M,\mathcal U$ such that
for each $(x,\xi)\in T^*M\setminus 0$, the sets
$$
\Omega_\pm(x,\xi)=\{s\in\mathbb R\mid \exp(sU_\pm)(x,\xi)\in \supp a_\pm\}\subset\mathbb R
$$
are $\nu$-porous on scales $C_0h^\rho$ to $1$.
\end{lemm}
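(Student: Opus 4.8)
The plan is to exploit the product structure of $a_\pm$ in~\eqref{e:a-pm-def}: $\supp a_+\subset\bigcap_{j=0}^{N_1}\varphi_j(\supp a_2)$ and $\supp a_-\subset\bigcap_{j=0}^{N_1}\varphi_{-j}(\supp a_2)$, where $N_1=\lceil\rho\log(1/h)\rceil$. The key dynamical input is that $\supp a_2$ misses the conic set $\mathcal U$ by~\eqref{e:a-j-def}, while $\mathcal U\cap S^*M$ is a nonempty open set. First I would fix a point $(y,\eta)\in\mathcal U\cap S^*M$ and a small radius $r>0$ so that the ball $B_r(y,\eta)\subset\mathcal U$; by compactness of $S^*M$ and the hyperbolicity of $\varphi_t$, there is a time $T_0>0$ (depending only on $M,\mathcal U,r$) such that for every $(x,\xi)\in S^*M$ there is some $t\in[0,T_0]$ with $\varphi_t(x,\xi)\in B_r(y,\eta)$: this is just the topological transitivity / density of geodesics, but I actually need the quantitative version, namely that the return happens within a bounded window $T_0$, which holds because $S^*M$ is compact and $\mathcal U$ is open.

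Next I would transfer this to a statement about the one-dimensional sets $\Omega_\pm(x,\xi)$. Consider $\Omega_+(x,\xi)=\{s:\exp(sU_+)(x,\xi)\in\supp a_+\}$ and an interval $I=[s_0,s_0+\ell]$ with $\ell\in[C_0h^\rho,1]$. Set $(x',\xi')=\exp(s_0 U_+)(x,\xi)$. Using $(\varphi_t)_*U_+=e^{-t}U_+$, flowing forward by a time $t\sim\log\ell^{-1}$ expands the stable horocyclic parameter, so the image of $I$ under $\exp(sU_+)$ for $s\in I$, after applying $\varphi_t$, becomes a horocyclic arc of size comparable to~$1$ based near $\varphi_t(x',\xi')$. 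Pick $t^*$ (bounded by $O(\log\ell^{-1})=O(\rho\log(1/h))\le N_1$ once $C_0$ is large) so that this arc has size on the order of the length scale on which transitivity operates; then there is a sub-sub-arc of definite relative size $\nu$ (controlled by $r$ and the $C^1$ geometry of the horocyclic foliation and the map $\varphi_{t^*}$) that lands inside $B_r(y,\eta)\subset\mathcal U$, hence in $\mathbb R^2\setminus\varphi_{t^*}(\supp a_2)$ pulled back — i.e.\ its preimage misses $\varphi_{t^*}^{-1}(\supp a_2)\supset\supp a_+$. Pulling this sub-arc back to $I$ by $\varphi_{-t^*}$, distortion of the horocyclic parameter is bounded (the relevant Jacobians along $U_+$ are $e^{\pm t^*}$, which cancel up to the bounded transverse distortion coming from $H_p$ and $D$ derivatives — controlled uniformly), so we obtain a subinterval $J\subset I$ with $|J|\ge\nu|I|$ and $J\cap\Omega_+(x,\xi)=\emptyset$. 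The argument for $\Omega_-$ is identical with $\varphi_t\leftrightarrow\varphi_{-t}$ and $U_+\leftrightarrow U_-$.

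The main obstacle I expect is making the scale-matching uniform: I need a single $\nu>0$ and $C_0>0$ that work simultaneously for \emph{all} base points $(x,\xi)$, \emph{all} scales $\ell\in[C_0h^\rho,1]$, and all the intermediate times $t^*\le N_1$ without the constants degrading as $h\to0$. This requires that the nonlinear distortion of the $U_\pm$-parametrization under $\varphi_{t^*}$ — beyond the exact $e^{\pm t^*}$ factor — stay bounded uniformly in $t^*$, which is where one genuinely uses that $M$ is a constant-curvature surface (so the horocyclic flows close up exactly and the structure constants of the frame $H_p,U_+,U_-,D$ are constant): on a variable-curvature manifold this step would cost additional exponential factors. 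The constant-curvature identities, together with compactness of $S^*M$, give the needed uniformity; I would cite the frame relations from~\cite[\S2.1]{meassupp} for this. Finally, the requirement $\ell\ge C_0h^\rho$ is exactly what guarantees $t^*\sim\log\ell^{-1}\le\rho\log(1/h)\le N_1$, so that the intersection defining $\supp a_\pm$ really does include the index $j=t^*$ needed to place the hole; choosing $C_0$ large absorbs the $O(1)$ discrepancy between $t^*$ and $T_0$.
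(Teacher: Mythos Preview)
Your overall strategy~--- rescale the horocyclic interval by the geodesic flow to unit size, find a sub-arc landing in $\mathcal U$, pull back~--- is exactly the paper's approach, and the constant-curvature identity $\varphi_t\circ\exp(sU_\pm)=\exp(e^{\mp t}s\,U_\pm)\circ\varphi_t$ is indeed what makes the rescaling exact with no nonlinear distortion. However, the dynamical input you invoke is wrong. You assert that for every $(x,\xi)\in S^*M$ there is $t\in[0,T_0]$ with $\varphi_t(x,\xi)\in B_r(y,\eta)$; this is the geometric control condition for the \emph{geodesic} flow, and it is false in general~--- any closed geodesic that misses $B_r(y,\eta)$ is a counterexample, and the whole point of the lemma is precisely to handle sets $\mathcal U$ failing geometric control. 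Topological transitivity of $\varphi_t$ gives density of a \emph{single} orbit, not uniform return times for \emph{all} orbits; compactness of $S^*M$ does not upgrade it.

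The correct input, and the one the paper uses, is a property of the \emph{horocycle} flow $\exp(sU_\pm)$ rather than the geodesic flow: by unique ergodicity (minimality would also suffice) of the horocycle flow on $S^*M$, there exist $T\geq 1$ and $\nu>0$ depending only on $M,\mathcal U$ such that every horocyclic segment of length between $T$ and $10T$ contains a sub-segment of relative length $\nu$ lying entirely in $\mathcal U$. This is exactly the statement you need once the arc has been rescaled to unit size, and it is the step you left unjustified when you wrote ``there is a sub-sub-arc of definite relative size $\nu$ that lands inside $B_r(y,\eta)$''. A smaller point: forward geodesic flow \emph{contracts} the stable parameter since $(\varphi_t)_*U_+=e^{-t}U_+$, so for $\Omega_+$ you must flow \emph{backward} by $j\sim\log(1/\ell)$ to make the arc have size $\sim 1$, and correspondingly use that $\supp a_+\cap\varphi_j(\mathcal U)=\emptyset$ for $0\le j\le N_1$.
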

\begin{proof}
We consider the case of $\Omega_-$. For an interval $I$ and $(x,\xi)\in T^*M\setminus 0$,
define the unstable horocyclic segment
$$
e^{IU_-}(x,\xi):=\{e^{sU_-}(x,\xi)\mid s\in I\}\subset T^*M\setminus 0.
$$
Note that the geodesic flow maps horocyclic segments to other horocyclic segments:
$$
\varphi_t(e^{IU_-}(x,\xi))=e^{(e^tI)U_-}(\varphi_t(x,\xi)).
$$
Since $\mathcal U$ is open, nonempty, and conic and the horocycle flow $\exp(sU_-)$ is uniquely ergodic
on~$S^*M$,
there exists $T=T(M,\mathcal U)\geq 1$ and $\nu>0$ such that each horocyclic segment
of length $T$ has a piece of length $10\nu T$ which lies in $\mathcal U$.
Thus for each $(x,\xi)\in S^*M$
and each interval $\widetilde I\subset\mathbb R$ with $T\leq|\widetilde I|\leq 10T$, there exists
a subinterval $\widetilde J\subset\widetilde I$ with $|\widetilde J|=\nu|\widetilde I|$
and $e^{\widetilde JU_-}(x,\xi)\subset \mathcal U$.

By~\eqref{e:a-j-def} and~\eqref{e:a-pm-def} we have for all $j=0,1,\dots,N_1$
$$
\supp a_-\cap \varphi_{-j}(\mathcal U)=\emptyset.
$$
Put $C_0:=10T$.
Assume that $(x,\xi)\in T^*M\setminus 0$ and $I\subset\mathbb R$ satisfies $C_0h^\rho\leq |I|\leq 1$.
We need to find
a subinterval $J\subset I$ with $|J|=\nu |I|$, $J\cap \Omega_-(x,\xi)=\emptyset$.

Choose $j\in \{0,1,\dots,N_1\}$ such that $T\leq e^j|I|\leq 10T$ and put
$\widetilde I:=e^j I$. Then there exists an interval $\widetilde J\subset \widetilde I$
such that $|\widetilde J|=\nu|\widetilde I|$ and
$e^{\widetilde JU_-}(\varphi_j(x,\xi))\subset \mathcal U$. Put
$J:=e^{-j}\widetilde J$, then
$J\subset I$, $|J|=\nu|I|$, and
$$
\supp a_-\cap e^{JU_-}(x,\xi)
\ \subset\ \supp a_-\cap\varphi_{-j}\big(e^{\widetilde JU_-}(\varphi_j(x,\xi))\big)
\ =\ \emptyset.
$$
Thus $J\cap\Omega_-(x,\xi)=\emptyset$ and the proof is finished.
\end{proof}
Lemma~\ref{l:porosity-1} shows that the intersection of $\Omega_\pm$
with each stable/unstable horocycle is porous. This argument can be upgraded
to show that in fact the projection of $\Omega_\pm$ onto the stable/unstable directions
is porous, see~\cite[Lemma~5.10]{meassupp}. Splitting $a_\pm$ into finitely many pieces
via a partition of unity, we may assume that
for some $\nu>0$ depending only on $M,\mathcal U$
\begin{itemize}
\item $\supp a_\pm\subset V$ where $V\subset T^*M\setminus 0$ is a small
$h$-independent open set;
\item $\psi_\pm: V\to \mathbb R$ are smooth functions, homogeneous of order 0 and such that
$$
\ker d\psi_\pm=\Span(H_p,D,U_\mp),
$$
that is for $(x,\xi)\in V\cap S^*M$ the value $\psi_+(x,\xi)$ determines
which weak unstable leaf passes through $(x,\xi)$
and $\psi_-(x,\xi)$ determines which weak stable leaf passes through $(x,\xi)$;
\item $\supp a_\pm \subset\psi_\pm^{-1}(X_\pm)$ where $X_\pm\subset [0,1]$
are $\nu$-porous on scales $h^\rho$ to 1
(we changed the value of $\rho$ slightly to get rid of $C_0$).
\end{itemize}

\subsection{Reduction to FUP for Fourier transform}

We now briefly explain how to reduce the norm bound~\eqref{e:fup-used} to
a one-dimensional fractal uncertainty principle for the semiclassical
Fourier transform, Proposition~\ref{l:fup-fourier}, referring to~\cite{fullgap,meassupp}
for details.

The main tool is a microlocal
normal form given by Fourier integral operators (see~\cite[\S2.2]{hgap}).
If $\varkappa:V\to T^*\mathbb R^2$ is a symplectomorphism onto its image, then
we can associate to $\varkappa$ a pair of bounded $h$-dependent operators
$$
B:L^2(M)\to L^2(\mathbb R^2),\quad
B':L^2(\mathbb R^2)\to L^2(M)
$$
such that for each $h$-independent $a\in C_0^\infty(V)$ we have
$$
\Op_h(a)=B'\Op_h(a\circ\varkappa^{-1})B+\mathcal O(h)_{L^2\to L^2}.
$$
Moreover, $BB'$ and $B'B$ are semiclassical pseudodifferential operators.
Same is true for the $S^{\comp}_{L,\rho}$ calculus, thus~\eqref{e:fup-used}
follows from the bound (suppressing the foliation $L$ in the quantization $\Op_h^L$)
\begin{equation}
  \label{e:fup-transformed}
\|\Op_h(b_-)\Op_h(b_+)\|_{L^2(\mathbb R^2)\to L^2(\mathbb R^2)}\leq Ch^{\beta},\quad
b_\pm :=a_\pm \circ\varkappa^{-1}.
\end{equation}
We would like to choose $\varkappa$ which `straightens out' the weak stable/unstable
foliations, which can be expressed in terms of the functions $\psi_\pm$.
Imagine first that we could find $\varkappa$ such that
\begin{equation}
  \label{e:this-is-unreal}
\psi_-\circ\varkappa^{-1}=x_1,\quad
\psi_+\circ\varkappa^{-1}=\xi_1.
\end{equation}
Then we get
\begin{equation}
  \label{e:b-supports}
\supp b_-\subset \{x_1\in X_-\},
\quad
\supp b_+\subset \{\xi_1\in X_+\}.
\end{equation}
We use the right quantization
$$
\Op_h(b)v(x)=(2\pi h)^{-2}\int_{\mathbb R^4} e^{{i\over h}\langle x-y,\xi\rangle}
b(y,\xi)v(y)\,dyd\xi
$$
Then the first part of~\eqref{e:b-supports} implies that for all $v\in L^2(\mathbb R^2)$
$$
\|\Op_h(b_-)v\|_{L^2}\leq C\|v\|_{L^2(X_-\times\mathbb R)}.
$$
The second one shows that $\Op_h(b_+)v$ lives at semiclassical frequencies
in $X_+\times\mathbb R$: if $\mathcal F_hf(\xi)=\widehat f(\xi/h)$ is the semiclassical
Fourier transform then
$$
\supp\mathcal F_h(\Op_h(b_+)v)\ \subset\ X_+\times\mathbb R.
$$
Therefore~\eqref{e:fup-transformed} reduces to the following estimate for all $v\in L^2(\mathbb R^2)$:
\begin{equation}
  \label{e:fup-revisited}
\supp \widehat v\subset h^{-1}X_+\times\mathbb R
\quad\Longrightarrow\quad
\|v\|_{L^2(X_-\times\mathbb R)}\leq Ch^\beta \|v\|_{L^2(\mathbb R^2)}.
\end{equation}
Since~\eqref{e:fup-revisited} does not depend on the $x_2$ variable,
we can remove it and reduce to the following one-dimensional statement
(applied with $f(x_1)=v(x_1,x_2)$ and all $x_2$, and quantifying how
close $\rho$ should be to~1):
\begin{prop}
  \label{l:fup-fourier}
For each $\nu>0$ there exist $C,\beta>0$ such that for all $f\in L^2(\mathbb R)$,
$0<\rho\leq 1$,
and $X,Y\subset [0,1]$ which are $\nu$-porous on scales $h^\rho$ to~1,
\begin{equation}
  \label{e:fup-fourier}
\supp \widehat f\subset h^{-1}\cdot Y
\quad\Longrightarrow\quad
\|f\|_{L^2(X)}\leq Ch^{\beta-2(1-\rho)} \|f\|_{L^2(\mathbb R)}.
\end{equation}
\end{prop}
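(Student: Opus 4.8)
The plan is to establish the one-dimensional estimate~\eqref{e:fup-fourier} by a multiscale induction: reduce the continuous statement to a combinatorial one about unions of dyadic intervals, and then gain a fixed factor below~$1$ at each scale on which $X$ and~$Y$ are porous.

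\emph{Discretization.} First I would fix an integer $K=K(\nu)\geq 2$ large enough that every interval~$J$ with $|J|=\nu|I|$ sitting inside a $K$-adic interval~$I$ contains at least one full $K$-adic child of~$I$; with this choice, $\nu$-porosity of~$X$ (and of~$Y$) on scales $h^\rho$ to~$1$ says that every $K$-adic subinterval of $[0,1]$ of length in $[h^\rho,1]$ has a $K$-adic child disjoint from~$X$ (resp.~$Y$). Passing to the $K$-adic hulls of $X,Y$ at scale~$h^\rho$ (enlarging them by a bounded factor) and replacing the sharp cutoffs by suitable smooth ones, \eqref{e:fup-fourier} becomes a uniform operator norm bound
\begin{equation*}
\big\|\,\mathbf 1_X\,\mathcal F_h\,\mathbf 1_Y\,\big\|_{L^2(\mathbb R)\to L^2(\mathbb R)}\ \leq\ C\,h^{\beta-2(1-\rho)}
\end{equation*}
for $X\subset[0,1]$ and $h^{-1}Y\subset[0,h^{-1}]$ that are unions of $K$-adic intervals with the above sibling-omission property along all of their ancestors down to scale~$h^\rho$ (resp.\ $h^{\rho-1}$). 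Since $\mathcal F_h$ is unitary the trivial bound is~$1$; the content is the gain.

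\emph{The one-scale gain.} Organize $X$ and $h^{-1}Y$ along the $K$-adic scales, so that there are roughly $\rho\log_K(1/h)$ \emph{porous} scales at which a sibling is omitted, and let $\beta_n$ be the best constant in the norm bound above when the sets carry $n$ porous scales. The heart of the argument is an inequality of the form
\begin{equation*}
\beta_n\ \leq\ (1-\varepsilon_0)\,\beta_{n-1},\qquad \varepsilon_0=\varepsilon_0(\nu,K)>0 .
\end{equation*}
To prove it one writes $\mathbf 1_X=\sum_j\mathbf 1_{X\cap I_j}$ and $\mathbf 1_Y=\sum_l\mathbf 1_{Y\cap J_l}$ over the top-scale $K$-adic children~$I_j,J_l$, discards the omitted child on each side, rescales each surviving block back to unit size — which, after factoring out the linear and modulation phases from $e^{-ix\xi/h}$, turns the block $\mathbf 1_{X\cap I_j}\mathcal F_h\mathbf 1_{Y\cap J_l}$ into an operator of the same type with $n-1$ porous scales — and reassembles the pieces. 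The delicate point, and the step I expect to be the main obstacle, is that the reassembly must actually \emph{gain} the factor $1-\varepsilon_0$: the blocks have pairwise orthogonal ranges in~$j$ and pairwise orthogonal domains in~$l$, but na\"ively this only yields $\beta_n\leq K\beta_{n-1}$, so one must exploit the oscillation of the phase $e^{-ix\xi/h}$ between the different blocks together with a genuine uncertainty estimate at the transition between two consecutive scales. This is exactly the main estimate of Bourgain--Dyatlov~\cite{fullgap}; its base case is a quantitative Amrein--Berthier / Logvinenko--Sereda type inequality asserting that a function whose semiclassical Fourier transform is confined to a unit interval cannot concentrate all of its $L^2$ mass on a subset of a unit interval that omits a subinterval of definite length, which can be obtained either from compactness (prolate spheroidal functions) or by an explicit Tur\'an--Nazarov inequality.

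\emph{Iteration and bookkeeping.} Iterating the one-scale gain over the $\sim\rho\log_K(1/h)$ porous scales turns the constant factor into a power of~$h$, $\beta_n\leq C(1-\varepsilon_0)^n$, and once the porous scales are exhausted one is left with the trivial bound~$\leq 1$. Converting $n\sim\rho\log_K(1/h)$ scales of gain into the exponent gives $\|\mathbf 1_X\mathcal F_h\mathbf 1_Y\|\leq Ch^{\rho\beta'}$ with $\beta'=\log\big(1/(1-\varepsilon_0)\big)/\log K>0$ depending only on~$\nu$; shrinking $\beta'$ if necessary so that $\beta'\leq 2$, one has $h^{\rho\beta'}\leq h^{\beta'-2(1-\rho)}$, which is~\eqref{e:fup-fourier} with $\beta=\beta'$. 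The remaining items — the reductions above, keeping track of the two groups of $K$-adic scales in position and frequency, and the passage between sharp and smooth cutoffs (by ellipticity, as in~\eqref{e:elliptor}) — are routine but somewhat lengthy, and for these, and for the proof of the main estimate, I would refer to~\cite{fullgap,meassupp,fupnotes}.
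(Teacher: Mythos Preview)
Your approach is genuinely different from the paper's, and the step you flag as ``the main obstacle'' is a real gap, not just a technicality to be cited away. You propose a symmetric multiscale induction $\beta_n\le(1-\varepsilon_0)\beta_{n-1}$ obtained by splitting both $X$ and $Y$ into their top-level $K$-adic children, rescaling each block, and reassembling. As you correctly observe, orthogonality of the blocks by itself yields at best $\beta_n\le(K-1)\beta_{n-1}$, a loss; obtaining a contraction would require cancellation between blocks coming from the phases $e^{-ix\xi/h}$. You then assert that this contraction ``is exactly the main estimate of Bourgain--Dyatlov~\cite{fullgap}''---but it is not. The proof in~\cite{fullgap} does not establish any one-scale block inequality of this kind, and an Amrein--Berthier/Logvinenko--Sereda type statement does not supply it either: those inequalities control functions whose Fourier support lies in a \emph{single} interval, whereas here $\widehat f$ lives on a porous union of $\sim h^{-1}$ many intervals, and that is the whole difficulty. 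No such per-scale contraction is known for general $\nu$-porous sets, and making one work would amount to a new proof of the fractal uncertainty principle.

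The paper's proof (following~\cite{fullgap}) is asymmetric in $X$ and $Y$ and proceeds by a completely different mechanism. Porosity of $X$ is exploited through an iteration over dyadic scales $k$: at each scale one gains a fixed factor on $\|\mathbf 1_{X_k}f\|$ provided one has the unique-continuation lower bound $\|f\|_{L^2(U'_k)}\ge c\|f\|_{L^2}$ for $f$ with $\supp\widehat f\subset h^{-1}Y+[-2^k,2^k]$ (Proposition~\ref{l:fup-main}). That lower bound is where porosity of $Y$ enters, and in an entirely different way: one constructs, via the Beurling--Malliavin theorem (or the direct construction of~\cite{JinZhang}), a compactly supported multiplier $\psi$ whose Fourier transform decays like $\exp\big(-c_1|\xi|/(\log|\xi|)^\delta\big)$ on $h^{-1}Y$ for some $\delta<1$ depending on $\nu$; then a harmonic-measure estimate for $\log|g_{\mathrm{low}}|$ on a thin strip (Lemma~\ref{l:harmonic}) turns this decay into the required lower bound for $f*\psi$ on the holes of $X$. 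There is no block decomposition in frequency and no per-scale submultiplicativity in this argument.
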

Unfortunately there is no symplectomorphism $\varkappa$ that simultaneously
straightens out the stable and the unstable foliations, that is~\eqref{e:this-is-unreal}
can never hold. Instead one can reduce~\eqref{e:fup-used} to an estimate
of the form~\eqref{e:fup-fourier} when the Fourier transform is replaced
by a different oscillatory integral operator, see~\cite[\S5.2]{meassupp}.
The resulting statement can be reduced to the original version of~\eqref{e:fup-fourier}
(with the Fourier transform)~-- see~\cite[\S4.2]{fullgap}.

\section{Proof of fractal uncertainty principle}

We finally explain some ideas behind the proof of the fractal uncertainty
principle, Proposition~\ref{l:fup-fourier}. First of all we may assume
that $\rho=1$. Indeed, otherwise $X,Y$ can be written as unions
of $h^{\rho-1}$ sets which are $\nu$-porous on scales $h$ to~1
(specifically take intersections of $X,Y$ with the unions of intervals
$\bigsqcup_{j\in\mathbb Z}[h^\rho j+{h\over 2}\ell,h^\rho j+{h\over 2}(\ell+1)]$ where $\ell=0,1,\dots,\lceil 2h^{\rho-1}\rceil$),
and it remains to use the triangle inequality.

The proof of Proposition~\ref{l:fup-fourier} is given in~\cite[Theorem~4]{fullgap};
the only difference is that~\cite{fullgap} required $X,Y$ to be Ahlfors--David regular
of dimension $\delta<1$. As explained in~\cite[\S5.1]{meassupp} each $\nu$-porous
set can be embedded into a regular set of dimension $\delta=\delta(\nu)<1$.
Moreover, the proof in~\cite{fullgap} can be adapted directly to $\nu$-porous sets,
and this is the approach we take here.

\subsection{An interation argument}

We start by exploiting the porosity of $X$ to reduce Proposition~\ref{l:fup-fourier}
to a lower bound on $f$ on a union of intervals, Proposition~\ref{l:fup-main}.
Fix large $K\in\mathbb N$ such that $2^{-K-1}\leq h\leq 2^{-K}$.
For $k\in 0,1,\dots,K$, consider the partition
$$
\mathbb R=\bigcup_{I\in\mathcal I(k)}I,\quad
\mathcal I(k):=\big\{[2^{-k}j,2^{-k}(j+1)]\colon j\in\mathbb Z\big\}.
$$
Since $X$ is $\nu$-porous on scales $h$ to~1, for each $I\in \mathcal I(k)$
there exists a subinterval
$$
I'\subset I,\quad
|I'|=\nu |I|=2^{-k}\nu,\quad
I'\cap X=\emptyset.
$$
Denote
$$
U'_k:=\bigcup_{I\in \mathcal I(k)}I'\subset\mathbb R,\quad
U'_k\cap X=\emptyset.
$$
The following is the key lower bound of $f$ on $U'_k$, proved later in this section:
\begin{prop}
  \label{l:fup-main}
There exists $c=c(\nu)>0$ such that for all $f\in L^2(\mathbb R)$ and all $k$,
\begin{equation}
  \label{e:fup-main}
\supp \widehat f\subset h^{-1}\cdot Y+[-2^k,2^k]
\quad\Longrightarrow\quad
\|f\|_{L^2(U'_k)}\geq c\|f\|_{L^2(\mathbb R)}.
\end{equation}  
\end{prop}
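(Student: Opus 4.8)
The plan is to prove Proposition~\ref{l:fup-main} by a downward induction on $k$, starting from the trivial case $k=0$ (where $U_0'$ contains a fixed-size interval and the frequency support condition is so restrictive that $f$ is nearly constant on scale~$1$) and showing that the lower bound at scale $2^{-(k-1)}$ propagates to scale $2^{-k}$. The mechanism at each step is the following dichotomy on each interval $I\in\mathcal I(k-1)$: either a definite fraction of the $L^2$ mass of $f$ on $I$ already sits on the hole $I'\subset I$ of the \emph{coarser} partition, in which case we are immediately done on $I$ since $I'$ is part of $U_{k-1}'$ and hence (after comparing the two partitions) contributes to the mass on $U_k'$ as well; or else most of the mass of $f$ on $I$ avoids $I'$, and then I want to use the frequency localization $\supp\widehat f\subset h^{-1}Y+[-2^k,2^k]$ together with porosity of $Y$ to force $f$ to have a hole somewhere inside $I$ on scale $2^{-k}$, which is exactly the location of the finer hole contributing to $U_k'$.

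The analytic heart of the argument is a \emph{local} uncertainty-type statement: if $f$ restricted to an interval $I$ of length $2^{-(k-1)}$ has its semiclassical frequency content concentrated in a set which is $\nu$-porous at scale $2^{-k}$ (relative to scale $2^{-(k-1)}$), then $f$ cannot have all its $L^2(I)$-mass spread out — it must lose a definite fraction of its mass when restricted to the sub-interval $I'_{\text{fine}}$ of length $2^{-k}\nu$ inside $I$ guaranteed by porosity of $X$. Concretely, I would rescale $I$ to unit size, so that the frequency window becomes roughly $h^{-1}2^{-(k-1)}\sim 2^{K-k}$ wide and the relevant frequency set $Y$ (rescaled) is porous at scale $2^{-1}$; then a single-scale Fourier-analytic lemma — essentially: a function on $[0,1]$ whose Fourier transform is supported in a $\nu$-porous-at-unit-scale set of a dyadic block cannot be too flat — gives the decay of mass on $I'_{\text{fine}}$. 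Summing the gained mass over all $I\in\mathcal I(k-1)$ and combining with the inductive hypothesis $\|f\|_{L^2(U'_{k-1})}\geq c_{k-1}\|f\|$ yields $\|f\|_{L^2(U'_k)}\geq c_k\|f\|$; the crucial point is that the constants $c_k$ must not decay to~$0$, which forces the single-scale gain to be uniform in $k$ (this is where porosity, as opposed to mere dimension $<1$, is used to get a clean uniform constant), and one arranges $c_k\geq c(\nu)$ by choosing the mixing parameter in the dichotomy correctly.

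I expect the main obstacle to be making the single-scale local uncertainty lemma genuinely local and uniform: the frequency localization hypothesis $\supp\widehat f\subset h^{-1}Y+[-2^k,2^k]$ is a statement about the \emph{global} Fourier transform of $f$, but the dichotomy needs to be run interval-by-interval on the physical side, and cutting $f$ off to an interval $I$ smears its Fourier transform by the width $|I|^{-1}\sim 2^{k-1}$ of the dual window — which is precisely the size of the fattening $[-2^k,2^k]$ built into the hypothesis, so the bookkeeping is tight and must be done carefully (this is exactly why the $[-2^k,2^k]$ term appears, and why it shrinks as $k$ decreases). A secondary technical point is handling the mismatch between the holes $I'$ of the partition $\mathcal I(k-1)$ and those of $\mathcal I(k)$: one needs that $U'_{k-1}$, broken into pieces of the finer partition, is itself (up to a bounded factor) accounted for inside $U'_k$, or else one reorganizes so that the inductive quantity is $\|f\|_{L^2(U'_k)}$ directly with the previous scales' holes absorbed. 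Both difficulties are resolved in~\cite{fullgap}; here I would present the iteration in the streamlined porous-set form, deferring the proof of the single-scale lemma and the uniformity of $c(\nu)$ to the detailed write-up.
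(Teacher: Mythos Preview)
Your proposal has a genuine gap at the base case and at the inductive mechanism.

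First, the case $k=0$ is not trivial. The hypothesis $\supp\widehat f\subset h^{-1}Y+[-1,1]$ does \emph{not} force $f$ to be slowly varying on scale~$1$: the set $h^{-1}Y$ has diameter $h^{-1}$, so $f$ can oscillate at any frequency up to $h^{-1}$. (Think of $Y$ with Lebesgue measure close to~$1$; then the Fourier constraint is nearly vacuous.) In the paper the reduction actually goes the other way: a rescaling reduces every $k$ to $k=0$, and then the entire analytic content of the proposition lives at $k=0$.

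Second, your single-scale lemma---``a function on $[0,1]$ whose Fourier transform is supported in a $\nu$-porous-at-unit-scale set of a dyadic block cannot be too flat''---is not true in the form you need. A single hole in the Fourier support, even of relative size~$\nu$, places essentially no constraint on the physical profile of $f$ on a fixed interval: take $\widehat f$ supported in $[N,3N/2]$, which avoids a hole of size $\nu N$ in $[N,2N]$, yet $|f|$ can be concentrated on an interval of length $\sim N^{-1}$ placed anywhere. So the dichotomy you describe does not produce a uniform gain, and there is no reason the constants $c_k$ stay bounded below.

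The paper's proof is of a different nature and uses the porosity of $Y$ at \emph{all} scales simultaneously rather than one scale per inductive step. After rescaling to $k=0$, one constructs (Lemma~\ref{l:multiplier}, via a Beurling--Malliavin type argument) a compactly supported multiplier $\psi$ whose Fourier transform decays like $\exp\big(-c|\xi|/(\log|\xi|)^\delta\big)$ on $h^{-1}Y$; the bound on the covering numbers~\eqref{e:N-bound}, which uses porosity at every dyadic scale, is exactly what makes the relevant Poisson integral converge. Then $g=f*\psi$ has the Fourier decay needed to feed into a complex-analytic unique continuation estimate (Lemma~\ref{l:harmonic}), which is where the mass on $U'$ is actually produced via harmonic measure in a slit strip. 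The iteration over $k$ in the paper happens \emph{afterwards}, in deducing Proposition~\ref{l:fup-fourier} from Proposition~\ref{l:fup-main}, not in the proof of Proposition~\ref{l:fup-main} itself.
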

We now explain how Proposition~\ref{l:fup-main} implies Proposition~\ref{l:fup-fourier};
see~\cite[\S3.4]{fullgap} for more details.
Denote $X_k:=\mathbb R\setminus U'_k$, then $X\subset X_k$ and Proposition~\ref{l:fup-main}
implies
\begin{equation}
  \label{e:fup-main-used}
\supp \widehat f\subset h^{-1}\cdot Y+[-2^k,2^k]
\quad\Longrightarrow\quad
\|\mathbf 1_{X_k}f\|_{L^2}\leq \sqrt{1-c^2}\|f\|_{L^2}.
\end{equation}
We have
$$
\|f\|_{L^2(X)}\leq \|\mathbf 1_{X_K}\cdots\mathbf 1_{X_1}\mathbf 1_{X_0}f\|_{L^2}.
$$
We would like to use~\eqref{e:fup-main-used} to show that for all $k$,
$$
\|\mathbf 1_{X_k}\cdots\mathbf 1_{X_0}f\|_{L^2}\leq \sqrt{1-c^2}\|\mathbf 1_{X_{k-1}}\cdots\mathbf 1_{X_0}f\|_{L^2}.
$$
This works well for $k=0$, since $\supp \widehat f\subset h^{-1}\cdot Y$. However,
the next values of $k$ do not work since we have no
restriction on the Fourier support of $\mathbf 1_{X_{k-1}}\cdots\mathbf 1_{X_0}f$.
To fix this we choose large $k_0$ depending on $c,\nu$ and
replace $\mathbf 1_{X_k}$ by its mollification
$$
\chi_k:=\mathbf 1_{X_k}*\psi_k,\quad
\psi_k(x)=2^{k+k_0}\psi(2^{k+k_0}x)
$$
where $\psi$ is a nonnegatives Schwartz function with
$\supp\widehat\psi\subset [-{1\over 2},{1\over 2}]$,
and $\int\psi=1$.
Note that $0\leq\chi_k\leq 1$ and $\supp\widehat\chi_k\subset\supp\widehat\psi_k\subset [-2^{k+k_0-1},2^{k+k_0-1}]$.

For $k$ divisible by $k_0$, put $f_k:=\chi_k\chi_{k-k_0}\cdots\chi_{k_0}\chi_0 f$. Then
$$
\supp \widehat f_{k-k_0}\ \subset\ \supp\widehat\chi_{k-k_0}+\cdots+\supp\widehat\chi_0+\supp\widehat f
\ \subset\ h^{-1}\cdot Y+[-2^k,2^k].
$$
We can arrange to have $\chi_k\leq {1\over 2}$ on $U'_k$.
Then \eqref{e:fup-main-used} applies to~$f_{k-k_0}$, implying
\begin{equation}
  \label{e:fup-iterating}
\|f_k\|_{L^2}= \|\chi_k f_{k-k_0}\|_{L^2}\leq \sqrt{1-c^2/10}\,\|f_{k-k_0}\|_{L^2}.
\end{equation}
Shrinking the intervals $I'$ by a factor of 2, we can ensure
that $X_k$ contains the $2^{-k-2}\nu$-neighborhood of $X$.
Then for large $k_0$, we have
$\chi_k\geq 1-2^{-k_0}$ on $X$. Iterating~\eqref{e:fup-iterating} we get
(assuming for simplicity that $K$ is divisible by $k_0$)
$$
\|f\|_{L^2(X)}\leq (1-2^{-k_0})^{-K/k_0-1}\|f_K\|_{L^2}
\leq 2\Big({\sqrt{1-c^2/10}\over 1-2^{-k_0}}\Big)^{K/k_0}\|f\|_{L^2}.
$$
Taking $k_0$ large enough, we can make this bounded
by $(1-c^2/100)^{K/k_0}\|f\|_{L^2}$. Since $c,k_0$ are independent
of $h$ and $K\sim\log(1/h)$ this
gives~\eqref{e:fup-fourier}
and finishes the proof of Proposition~\ref{l:fup-fourier}.

\subsection{Unique continuation estimates}

We now discuss how to prove Proposition~\ref{l:fup-main}, which is a \emph{unique continuation estimate}
for functions of restricted Fourier support.

It suffices to consider the case $k=0$. Indeed, otherwise we define
the rescaled function $\widetilde f(x):=2^{-k/2}f(2^{-k}x)$.
The Fourier transform of $\widetilde f$ is supported on
$2^{-k}\supp\widehat f\subset \widetilde h^{-1}\cdot Y+[-1,1]$
where $\widetilde h:=2^kh$. Moreover
$\|f\|_{L^2(U'_k)}=\|\widetilde f\|_{L^2(2^kU'_k)}$
and $2^kU'_k$ satisfies same assumptions as $U'_0$.
It remains to apply~\eqref{e:fup-main} with $k:=0$, $h:=\tilde h$,
and $f:=\widetilde f$. We henceforth assume that $k=0$ and
write $\mathcal I:=\mathcal I(0)$ and $U':=U'_0$.

In this section we show a bound of the type~\eqref{e:fup-main} which replaces
the requirement on $\supp\widehat f$ with a Fourier decay condition.
See~\cite[\S\S3.2,3.3]{fullgap} for details.
\begin{lemm}
  \label{l:harmonic}
Assume that $\theta$ is a positive even function on $\mathbb R$,
$\theta$ is decreasing on $[0,\infty)$, and $\xi\theta(\xi)$ is increasing
on $[0,\infty)$.
Then there exists a constant $C$ depending only
on~$\nu,\theta$ such that for all $K\geq 10$
and all $g\in L^2(\mathbb R)$ we have
\begin{equation}
  \label{e:harmonic}
\|g\|_{L^2(\mathbb R)}\leq {C\over \theta(K)}\|g\|_{L^2(U')}^{\kappa}\cdot
\big\|e^{|\xi|\theta(\xi)}\widehat g(\xi)\big\|_{L^2(\mathbb R)}^{1-\kappa}
+{Ce^{-\kappa K\theta(K)}\over\theta(K)}\big\|e^{|\xi|\theta(\xi)}\widehat g(\xi)\big\|_{L^2(\mathbb R)}
\end{equation}
where $\kappa=e^{-C/\theta(K)}$.
\end{lemm}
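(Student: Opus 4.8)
The statement is a quantitative unique continuation / three-lines type estimate: if $g$ has Fourier transform decaying like $e^{-|\xi|\theta(\xi)}$ and $g$ is small on the union of holes $U'$, then $g$ is small globally. The natural tool is a logarithmic-convexity (Hadamard three-circles) argument applied to an analytic extension of $g$. The plan is as follows. First I would reduce to the case where the quantity $M:=\|e^{|\xi|\theta(\xi)}\widehat g\|_{L^2}$ is finite (otherwise there is nothing to prove) and normalize $M=1$. The Fourier decay bound with weight $e^{|\xi|\theta(\xi)}$ and the monotonicity hypothesis on $\xi\theta(\xi)$ guarantee that $g$ extends holomorphically to a horizontal strip $\{|\operatorname{Im} z|<\tau\}$ for some $\tau=\tau(\theta,K)>0$ — concretely $\tau$ can be taken comparable to $\theta(cK)$ for an appropriate scale, because on frequencies $|\xi|\le K$ one can afford an exponential weight $e^{\tau|\xi|}$ with $\tau\lesssim\theta(K)$, and on $|\xi|\ge K$ one uses $|\xi|\theta(\xi)\ge K\theta(K)$ together with the tail factor $e^{-\kappa K\theta(K)}$. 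This is where the second term on the right-hand side of~\eqref{e:harmonic} and the shape of $\kappa=e^{-C/\theta(K)}$ come from.

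Next, the heart of the matter is a local-to-global propagation of smallness. On each interval $I\in\mathcal I$ of unit length, $U'$ contains a subinterval $I'$ of length $\nu$ on which $g$ is controlled by $\|g\|_{L^2(U')}$; since $g$ is holomorphic on a strip of width $\tau$ around $\mathbb R$ with an explicit bound on $|g|$ there, a Hadamard three-lines (or harmonic measure) estimate on a disk or rectangle of size $\sim 1$ gives, on the doubled interval, a bound of the form $\|g\|_{L^2(2I)}\le C\,\|g\|_{L^2(I')}^{\kappa}\,(\text{strip bound})^{1-\kappa}$ with an interpolation exponent $\kappa$ that degenerates like $e^{-C/\tau}$ as $\tau\to 0$ — this is exactly the stated $\kappa=e^{-C/\theta(K)}$, using $\tau\asymp\theta(K)$. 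Here I would be careful that the estimate be uniform in the interval $I$ (translation invariance) and that the constant depend only on $\nu$ and the profile $\theta$; the $1/\theta(K)$ prefactors track the width of the strip and the normalization of the harmonic measure.

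Finally I would assemble the pieces: square the local bounds over all $I\in\mathcal I$, apply Hölder in the form $\sum a_I^{\kappa}b_I^{1-\kappa}\le(\sum a_I)^{\kappa}(\sum b_I)^{1-\kappa}$ to pass from the per-interval estimates to a global one, use $\sum_I\|g\|_{L^2(I')}^2=\|g\|_{L^2(U')}^2$ and $\sum_I(\text{strip bound on }2I)^2\lesssim M^2$ (the strip bound is an $L^2$ quantity summable over a bounded-overlap cover), and collect the $1/\theta(K)$ factors. The tail contribution from frequencies $|\xi|\ge K$ has to be split off before the holomorphic extension step and carried as the additive term $Ce^{-\kappa K\theta(K)}/\theta(K)\cdot M$, which is why it appears separately rather than being absorbed into the interpolation. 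The main obstacle I anticipate is making the three-lines / harmonic-measure step genuinely quantitative with the correct doubly-exponentially small exponent $\kappa=e^{-C/\theta(K)}$ and with constants that are uniform over all unit intervals and depend only on $\nu$ and $\theta$; everything else is bookkeeping (rescaling, Hölder, summation over the cover).
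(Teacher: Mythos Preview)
Your plan is correct and matches the paper's proof essentially step for step: split off frequencies $|\xi|>K$ (producing the additive tail term), extend the low-frequency part holomorphically, apply a subharmonicity / harmonic-measure estimate on a strip of height $r=\theta(K)$ to get the local interpolation inequality with exponent $\kappa=e^{-C/r}$, and sum over $I\in\mathcal I$ via H\"older. The only cosmetic difference is that the paper makes the frequency cutoff sharp (so $g_{\mathrm{low}}$ has compact Fourier support and extends to all of $\mathbb C$) and carries out the harmonic-measure step on the slit strip $\{|\Im z|<\theta(K)\}\setminus I'$ rather than on a rectangle, but your packaging would work just as well.
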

\Remark Letting $K\to\infty$ we see that if
\begin{equation}
  \label{e:condition-theta}
e^{-C/\theta(K)}K\theta(K)-\log\theta(K)\to\infty\quad\text{as }K\to\infty
\end{equation}
then any $g\not\equiv 0$ with $e^{|\xi|\theta(\xi)}\widehat g(\xi)\in L^2$
cannot identically vanish on $U'$; this can be used to show that such~$g$ cannot be compactly supported.
Note that~\eqref{e:condition-theta} holds when $\theta(K)=1$,
this is reasonable since functions with exponentially decaying Fourier transform are real analytic.
On the other hand~\eqref{e:condition-theta} does not hold when $\theta(K)=K^{-\varepsilon}$
and $\varepsilon>0$; this corresponds to existence of compactly supported functions in Gevrey classes.
We will use later that~\eqref{e:condition-theta} holds
with $\theta(K)=(\log K)^{-\delta}$ when $\delta<1$.
\begin{proof}[Sketch of the proof]
\newcommand{\high}{\mathrm{high}}
\newcommand{\low}{\mathrm{low}}
Put $R:=\|e^{|\xi|\theta(\xi)}\widehat g(\xi)\|_{L^2}$.
We split $g$ into low and high frequencies:
$$
g=g_{\low}+g_{\high},\quad
\supp \widehat g_{\low}\subset [-K,K],\quad
\supp \widehat g_{\high}\subset \mathbb R\setminus [-K,K].
$$
Then $g_{\high}$ is estimated in terms of $R$:
\begin{equation}
  \label{e:f-high}
\|g_{\high}\|_{L^2}\leq e^{-K\theta(K)}R.
\end{equation}
As for $g_{\low}$, we claim that it satisfies the bound
\begin{equation}
  \label{e:harmonic-1}
\|g_{\low}\|_{L^2(\mathbb R)}\leq {C\over \theta(K)}\|g_{\low}\|_{L^2(U')}^\kappa\cdot R^{1-\kappa}.
\end{equation}
We only show the following local version: for all $I\in\mathcal I$,
\begin{equation}
  \label{e:harmonic-2}
\|g_{\low}\|_{L^2(I)}\leq {C\over \theta(K)}\|g_{\low}\|_{L^2(I')}^\kappa\cdot R^{1-\kappa}.
\end{equation}
The bound~\eqref{e:harmonic-1} is obtained similarly, summing
over all the intervals $I$ in the end;
see the proof of~\cite[Lemma~3.2]{fullgap}.

To prove~\eqref{e:harmonic-2}, we extend $g_{\low}$ holomorphically to $\mathbb C$;
this is possible since $\widehat g_{\low}$ is compactly supported.
Consider the following domain in the complex plane:
$$
\Sigma:=\{z\in \mathbb C\colon |\Im z|<r\}\setminus I',\quad
r:=\theta(K).
$$
For each $t\in I\setminus I'$, let $\mu_t$ be the harmonic measure of $\Sigma$
centered at $t$; it is a probability measure on the boundary
$$
\partial\Sigma=\Sigma_+\sqcup\Sigma_-\sqcup I',\quad
\Sigma_\pm:=\{\Im z=\pm r\}.
$$
In other words, for each harmonic function $u$ on $\Sigma$ its value $u(t)$ is equal to the integral of
the boundary values of $u$ over $\mu_t$. Since $\log|g_{\low}|$ is subharmonic, we have
\begin{equation}
  \label{e:bunny-1}
\log |g_\low(t)|\leq \int_{\partial\Sigma}\log|g_{\low}(z)|\,d\mu_t(z).
\end{equation}
We have $\mu_t(I')\geq \kappa$ for all $t\in I\setminus I'$. This can be explained for instance
using the stochastic interpretation of harmonic measure: $\mu_t$ is the probability distribution
of the point through which the Brownian motion starting at $t$ will exit the domain $\Sigma$.
Since $t$ is distance 1 away from $I'$ and $\Sigma$ has height $2r$, the
probability that the Brownian motion starting at $t$ hits $I'$ before $\Sigma_\pm$
is bounded below by $e^{-C/r}$.

Now~\eqref{e:bunny-1} implies (using that $\sup_{I'} |g_{\low}|\leq \sup_{\Sigma_+\sqcup\Sigma_-}|g_{\low}|$)
\begin{equation}
  \label{e:bunny-2}
\sup_I|g_{\low}|\leq \big(\sup_{I'}|g_{\low}|\big)^\kappa\cdot \big(\sup_{\Sigma_+\sqcup\Sigma_-}|g_{\low}|\big)^{1-\kappa}.
\end{equation}
Using estimates on the density of $\mu_t$ one can replace the sup-norms in~\eqref{e:bunny-2}
by $L^2$-norms, with small changes~-- see the proof of~\cite[Lemma~3.2]{hgap}.
This yields~\eqref{e:harmonic-2}, where we use that
$$
\|g_{\low}\|_{L^2(\Sigma_\pm)}\leq \|e^{r|\xi|}\widehat g_{\low}(\xi)\|_{L^2(\mathbb R)}\leq R.
$$

Armed with~\eqref{e:f-high} and~\eqref{e:harmonic-1} we write
$$
\begin{aligned}
\|g\|_{L^2}&\leq \|g_{\low}\|_{L^2}+\|g_{\high}\|_{L^2}
\leq {C\over \theta(K)}\|g_{\low}\|_{L^2(U')}^\kappa\cdot R^{1-\kappa}
+\|g_{\high}\|_{L^2}\\
&\leq
{C\over \theta(K)}\big(\|g\|_{L^2(U')}^\kappa+\|g_{\high}\|_{L^2}^\kappa\big)\cdot R^{1-\kappa}
+\|g_{\high}\|_{L^2}\\
&\leq
{C\over\theta(K)}\|g\|_{L^2(U')}^\kappa\cdot R^{1-\kappa}
+{Ce^{-\kappa K\theta(K)}\over\theta(K)}R
\end{aligned}
$$
which gives~\eqref{e:harmonic}.
\end{proof}

\subsection{An adapted multiplier and end of the proof}

We finally explain how to use the Fourier support condition $\supp\widehat f\subset h^{-1}\cdot Y+[-1,1]$.
To do that we construct a compactly supported multiplier adapted to $Y$:
\begin{lemm}
  \label{l:multiplier}
There exists constants%
\footnote{In the notation of~\cite{fullgap}, one should replace $\delta$
by $1+\delta\over 2$.}
$c_1>0$, $\delta<1$ depending only on $\nu$ and a function
$\psi\in L^2(\mathbb R)$ such that, defining
$$
\theta(\xi):=\big(\log(10+|\xi|)\big)^{-\delta},
$$
we have
\begin{enumerate}
\item $\supp\psi\subset [-\nu/10,\nu/10]$;
\item $\|\widehat\psi\|_{L^2(-1/2,1/2)}\geq c_1$;
\item $|\widehat\psi(\xi)|\leq (1+|\xi|)^{-10}\exp(-c_1|\xi|\theta(\xi))$ for all $\xi\in h^{-1}\cdot Y+[-2,2]$;
\item $|\widehat\psi(\xi)|\leq (1+|\xi|)^{-10}$ for all $\xi$.
\end{enumerate}
\end{lemm}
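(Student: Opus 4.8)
The plan is to build $\psi$ as a long convolution product (equivalently, $\widehat\psi$ as an infinite product of rescaled bump functions), one factor for each dyadic scale of the porous set $Y$, exactly as in the proof of the fractal uncertainty principle for regular sets in~\cite{fullgap}. I would start from a fixed smooth nonnegative bump $\chi_0$ supported in an interval of length $\sim\nu$ with $\int\chi_0=1$ and $\widehat\chi_0(0)\neq 0$; then, using $\nu$-porosity of $Y$ on scales $h$ to $1$, for each dyadic scale $2^{-k}$ (with $1\le k\lesssim \log(1/h)$) and each relevant interval $I\in\mathcal I(k)$ choose a hole $I'\subset I$ of length $\nu|I|$ with $I'\cap Y=\emptyset$, and let $\varphi_k$ be a nonnegative bump adapted so that its Fourier transform (a function on the frequency side, at frequency scale $2^k/h$) vanishes to high order, or is exponentially small, on the portion of $h^{-1}\cdot Y$ at that scale. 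Setting $\psi:=\chi_0*\varphi_1*\varphi_2*\cdots$, normalized, one has $\widehat\psi=\widehat\chi_0\cdot\prod_k\widehat\varphi_k$.

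Then I would verify the four conclusions in turn. Property (1), $\supp\psi\subset[-\nu/10,\nu/10]$, follows by making each $\varphi_k$ supported in an interval of length $\le \nu\cdot 2^{-k}/C$ with $\sum_k$ of these lengths plus the length of $\supp\chi_0$ at most $\nu/5$; convolution adds supports. Property (2), $\|\widehat\psi\|_{L^2(-1/2,1/2)}\ge c_1$, follows from continuity of each $\widehat\varphi_k$ at the origin and the fact that $|\widehat\varphi_k(\xi)|\ge 1/2$, say, for $|\xi|\le 2^{k}/(Ch)$ (the relevant scale for factor $k$), so the product stays bounded below on $[-1/2,1/2]$; this is where one uses that only $\sim\log(1/h)$ factors are nontrivial and each contributes a factor bounded below, with the product of the lower bounds still $\gtrsim 1$ — this needs the usual care that $\prod_k(1-2^{-k})\gtrsim 1$. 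Property (4) is immediate from $|\widehat\varphi_k|\le 1$ together with one factor of the form $(1+|\xi|)^{-10}$ extracted from $\widehat\chi_0$ (take $\chi_0$ smooth enough). Property (3) is the real content: for $\xi\in h^{-1}\cdot Y+[-2,2]$ one locates the scale $k$ with $2^{k}\sim h|\xi|$; since $\xi$ lies (within $O(1)$) over a point of $Y$, that point avoids the hole $I'$ used to build $\varphi_k$, so $\widehat\varphi_k(\xi)$ is small — and one wants to extract a gain of $\exp(-c_1|\xi|\theta(\xi))$, i.e. an exponentially small factor with the subexponential rate $|\xi|(\log|\xi|)^{-\delta}$, which is achieved by letting the \emph{number} of nearby scales $k'$ whose holes also help grow, or equivalently by taking $\varphi_k$ itself to have Fourier decay like $\exp(-c|\xi/2^k|^{1-})$ away from a neighborhood of a point; the precise bookkeeping is as in~\cite[\S3.1]{fullgap}, noting the footnote's reindexing $\delta\mapsto (1+\delta)/2$.

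The main obstacle I expect is precisely the quantitative estimate in (3): arranging that the cumulative smallness of $\widehat\psi$ at a frequency $\xi\in h^{-1}Y+[-2,2]$ is as large as $\exp(-c_1|\xi|\theta(\xi))$ with $\theta(\xi)=(\log(10+|\xi|))^{-\delta}$, $\delta<1$. A single dyadic factor $\widehat\varphi_k$ can only be made to decay at its \emph{own} frequency scale like a Gevrey-type bound $\exp(-c\,(|\xi|/2^k)^{\gamma})$ with $\gamma<1$ (one cannot do exponential, since $\varphi_k$ is compactly supported and nonconstant), so the exponential-in-$|\xi|$ gain at the true scale $|\xi|\sim 2^k/h$ must be accumulated across a window of consecutive scales $k'\le k$ whose holes, by porosity, also miss the relevant point — and controlling how many such scales contribute, uniformly over the porous structure, is exactly what forces the logarithmic loss in $\theta$ and the restriction $\delta<1$. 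This is where one invokes the porosity-to-regularity reduction mentioned before Lemma~\ref{l:harmonic} (each $\nu$-porous set embeds in an Ahlfors--David regular set of dimension $\delta(\nu)<1$) to make the counting clean, after which the construction is the one in~\cite[\S3.1]{fullgap}. The remaining steps — support and normalization bounds — are routine.
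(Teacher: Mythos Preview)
Your convolution-product approach has a genuine gap, and in fact cannot work for a reason that is at the heart of the lemma. A product $\psi=\chi_0*\varphi_1*\varphi_2*\cdots$ of spatial bumps produces a compactly supported $\psi$ whose Fourier decay is \emph{uniform} in $\xi$, with rate governed only by the widths of the $\varphi_k$. But the rate demanded in property~(3), namely $\exp\big(-c_1|\xi|(\log|\xi|)^{-\delta}\big)$ with $\delta<1$, is in the quasi-analytic regime: by Denjoy--Carleman (equivalently, divergence of $\int_1^\infty |\xi|^{-1}(\log|\xi|)^{-\delta}\,d\xi$), \emph{no} nonzero compactly supported function has this Fourier decay for all $\xi$. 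The content of the lemma is precisely that such decay is achievable on the porous set $h^{-1}\cdot Y+[-2,2]$ alone, while~(2) keeps $\widehat\psi$ large near $0$. Your key step ``that point avoids the hole $I'$ used to build $\varphi_k$, so $\widehat\varphi_k(\xi)$ is small'' has no mechanism behind it: a spatial bump $\varphi_k$ of width $\sim 2^{-k}$ gives a fixed smooth profile $\widehat\varphi_k$ at frequency scale $\sim 2^k$, with no way to be selectively small on a prescribed fractal frequency set. (Your scale identifications also slip: $|\widehat\varphi_k(\xi)|\geq 1/2$ holds for $|\xi|\lesssim 2^k$, not $2^k/(Ch)$.)

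The paper's proof is entirely different and is what actually appears in~\cite[\S3.1]{fullgap}. One first builds a weight $w(\xi)\geq |\xi|\theta(\xi)$ on $h^{-1}\cdot Y$ by covering each dyadic piece $h^{-1}Y\cap[2^{k-1},2^k]$ with $N_k$ intervals of length $2^k\theta(2^k)$ and summing smooth bumps of that height. Porosity enters only as a \emph{counting} bound $N_k\leq C\,\theta(2^k)^{-(1-\varepsilon)}$ with $\varepsilon=\varepsilon(\nu)>0$ (iterated hole-removal in a Cantor-type argument), which makes the Poisson integral $\int w(\xi)/(1+\xi^2)\,d\xi\leq C\sum_k\theta(2^k)^{1+\varepsilon}$ converge once $\delta(1+\varepsilon)>1$; this is what fixes the admissible $\delta<1$. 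The Beurling--Malliavin theorem then yields a nonzero $\psi$ supported in $[-\nu/10,\nu/10]$ with $|\widehat\psi|\leq e^{-w}$, and a compactness argument makes the constants uniform in $h$. The alternative construction of~\cite{JinZhang} also goes through the weight $w$ (exploiting that its Hilbert transform has bounded derivative), not through convolution products.
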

Before explaining the proof of Lemma~\ref{l:multiplier} we use it to give
\begin{proof}[Sketch of the proof of Proposition~\ref{l:fup-main}]
For each $I\in\mathcal I$, let $I''\subset I'$ be the interval with the same
center and $|I''|={1\over 2}|I'|=\nu/2$. Let $U''$ be the union of all $I''$.
Take the function $\psi$ from Lemma~\ref{l:multiplier} and define
$$
g:=f * \psi,\quad
\widehat g=\widehat f\cdot\widehat \psi.
$$
Since $\supp \widehat f\subset h^{-1}\cdot Y+[-1,1]$ we have (using the Sobolev space $H^{-10}(\mathbb R)$)
$$
\|e^{c_1|\xi|\theta(\xi)}\widehat g(\xi)\|_{L^2}\leq \|f\|_{H^{-10}}.
$$
Applying Lemma~\ref{l:harmonic} with $U''$ in place of $U'$, we get for all $K$
\begin{equation}
  \label{e:eagle-1}
\|g\|_{L^2(\mathbb R)}\leq {C\over \theta(K)}\|g\|_{L^2(U'')}^\kappa\cdot \|f\|_{H^{-10}}^{1-\kappa}
+{Ce^{-c_1\kappa K\theta(K)}\over\theta(K)}\|f\|_{H^{-10}}.
\end{equation}
Now, the support condition on $\psi$ implies that
$g=(\mathbf 1_{U'}f)*\psi$ on $U''$, therefore
$\|g\|_{L^2(U'')}\leq \|\mathbf 1_{U'}f\|_{H^{-10}}$. We can apply the same argument
to $f_\eta(x)=e^{ix\eta}f(x)$, $|\eta|\leq 1$, since
$\supp \widehat f_\eta\subset h^{-1}\cdot Y+[-2,2]$.
Then~\eqref{e:eagle-1} bounds
$\|f_\eta*\psi\|_{L^2}=\|\widehat f(\xi-\eta)\widehat\psi(\xi)\|_{L^2}$.
Integrating the square of this over $\eta$ and using
property~(2) of $\psi$ we bound
$\|\widehat f\|_{L^2(-1/2,1/2)}$:
\begin{equation}
  \label{e:eagle-2}
\|\widehat f\|_{L^2(-1/2,1/2)}\leq {C\over \theta(K)}\|\mathbf 1_{U'}f\|_{H^{-10}}^\kappa\cdot \|f\|_{H^{-10}}^{1-\kappa}
+{Ce^{-c_1\kappa K\theta(K)}\over\theta(K)}\|f\|_{H^{-10}}.
\end{equation}
Finally, we see that~\eqref{e:eagle-2} holds also when
$f$ is replaced by $f_\ell(x)=e^{i\ell x}f(x)$ and $\ell\in\mathbb Z$, $|\ell|\leq h^{-1}$.
Indeed, $\supp\widehat f_\ell$ lies in $h^{-1}(Y+\ell h)+[-1,1]$ and
$Y+\ell h$ is still $\nu$-porous.%
\footnote{This means that the argument actually uses many multipliers $\psi$,
one for each value of $\ell$.}
Adding the resulting inequalities (see the proof of~\cite[Proposition~3.3]{fullgap}) we get
\begin{equation}
  \label{e:eagle-3}
\|f\|_{L^2}\leq {C\over \theta(K)}\|f\|_{L^2(U')}^\kappa\cdot \|f\|_{L^2}^{1-\kappa}
+{Ce^{-c_1\kappa K\theta(K)}\over\theta(K)}\|f\|_{L^2}.
\end{equation}
As remarked after Lemma~\ref{l:harmonic}, the function $\theta(K)=c_1(\log (10+K))^{-\delta}$
satisfies~\eqref{e:condition-theta}. Therefore if $K$ is large (but independent of $h$)
the last term on the right-hand side of~\eqref{e:eagle-3} can be removed,
yielding~\eqref{e:fup-main}.
\end{proof}
We finally explain how to prove Lemma~\ref{l:multiplier}.
We ignore the $|\xi|^{-10}$ prefactor in property~(3)
(one can always convolve $\psi$ with a function in $C_0^\infty(\mathbb R)$ to regain it).
We also replace $h^{-1}\cdot Y+[-2,2]$ with just $h^{-1}\cdot Y$ for simplicity, but the argument below applies to $h^{-1}\cdot Y+[-2,2]$ as well.

Denote $\widetilde Y:=h^{-1}\cdot Y$.
We first construct a `reasonable' weight function
\begin{equation}
  \label{e:weight-as}
w:\mathbb R\to [0,\infty),\quad
w(\xi)\geq |\xi|\theta(\xi)\quad\text{for }\xi\in \widetilde Y.
\end{equation}
Assume for simplicity that $h=2^{-K}$ and consider a dyadic partition of $\widetilde Y$:
$$
\widetilde Y=\bigcup_{k=0}^K (\widetilde Y\cap J_k),\quad
J_0=[-1,1],\quad
J_k=[-2^k,-2^{k-1}]\cup[2^{k-1},2^k]\text{ for }k\geq 1.
$$
We next take a minimal covering of each $\widetilde Y\cap J_k$ by intervals of size $2^k\theta(2^k)$:
$$
\widetilde Y\cap J_k\subset \bigcup_{\ell=1}^{N_k} J_{k\ell},\quad
|J_{k\ell}|=2^k\theta(2^k).
$$
It is finally time to use the fact that $Y$ is $\nu$-porous:
we claim that there exists $\varepsilon=\varepsilon(\nu)>0$
such that
\begin{equation}
  \label{e:N-bound}
N_k\leq C \big(\theta(2^k)\big)^{-(1-\varepsilon)}.
\end{equation}
Indeed, put $m:=\lceil 2/\nu\rceil$. Divide $J:=[2^{k-1},2^k]$ into
$m$ intervals of size $m^{-1}|J|$ each. It follows from
$\nu$-porosity of $Y$ that at least one of these intervals does not intersect
$\widetilde Y$. Divide each of the remaining intervals into $m$ intervals of size
$m^{-2}|J|$ each and repeat the process, at $j$-th step
keeping only $(m-1)^j$ intervals of size $m^{-j}|J|$ each. We stop once $m^{-j}\sim\theta(2^k)$,
then the number of resulting intervals is bounded by~\eqref{e:N-bound}
where $\varepsilon=1-{\log(m-1)\over \log m}$.

Now, define nonnegative weights $w_{k\ell}\in C_0^\infty(\mathbb R)$ such that $\sup|w'_{k\ell}|\leq 100$,
$w_{k\ell}$ is supported on the $2^k\theta(2^k)$-neighborhood of $J_{k\ell}$,
and $w_{k\ell}=2^k\theta(2^k)$ on $J_{k\ell}$. Put
$$
w:=10\sum_{k=0}^K \sum_{\ell=1}^{N_k} w_{k\ell},
$$
then $w$ satisfies~\eqref{e:weight-as} for large enough $|\xi|$ and one can change
it for $|\xi|\leq C$ to make sure~\eqref{e:weight-as} holds everywhere.  

Given~\eqref{e:weight-as}, it remains to construct $\psi$ which
satisfies properties~(1) and~(2) in Lemma~\ref{l:multiplier}
and $|\widehat\psi|\leq \exp(-c_1w)$. One way is to use the Beurling--Malliavin Theorem~\cite{Beurling-Malliavin},
see~\cite[\S3.1]{fullgap}. This theorem asserts that there exists
a function
$$
\psi\in L^2(\mathbb R),\quad
\psi\not\equiv 0,\quad
|\psi|\leq \exp(-w)
$$
if the derivative $w'$ is bounded and the Poisson integral of $w$ converges:
\begin{equation}
  \label{e:w-integral}
\int_{\mathbb R}{w(\xi)\over 1+\xi^2}\,d\xi\leq \infty.
\end{equation}
Our weight $w$ has derivative bounded by an $h$-independent constant.
Same is true of its Poisson integral: using~\eqref{e:N-bound} we estimate
$$
\int_{\mathbb R}{w(\xi)\over 1+\xi^2}\,d\xi\leq C\sum_{k=0}^K N_k \big(\theta(2^k)\big)^2
\leq C\sum_{k=0}^K \big(\theta(2^k)\big)^{1+\varepsilon}
\leq C\sum_{k=0}^K k^{-\delta(1+\varepsilon)}
$$
which is bounded if we choose $\delta<1$ such that $\delta(1+\varepsilon)>1$.
A compactness argument using the Beurling--Malliavin theorem
(see~\cite[Lemma~2.11]{fullgap}) now gives Lemma~\ref{l:multiplier}.

The Beurling--Malliavin theorem is one of the deepest statements in harmonic analysis.
It is thus worth noting that we do not need the full strength of this theorem here.
This is due to the fact that the Hilbert transform of our weight $w$ has derivative
bounded by some $h$-independent constant, and there is a direct construction of the
function $\psi$ for such weights. See~\cite{JinZhang} for details.

\medskip\noindent\textbf{Acknowledgements.}
This research was conducted during the period the author served as
a Clay Research Fellow.


\end{document}